\newtheorem{thm}{Theorem}[section]
\newtheorem{con}[thm]{Conjecture}
\newtheorem{lem}[thm]{Lemma}
\numberwithin{equation}{section}
\newcommand{\bQ}{{\mathbb{Q}}}
\newcommand{\R}{{\mathbb{R}}}
\newcommand{\rank}{\operatorname{rank}}
\newcommand{\dmt}{{[d-t]}}
\tikzset{nodeblack/.style={circle,draw=black,fill=black!30,inner sep=1.2pt}}
\tikzset{every loop/.style={}}
\begin{document}

\title{An improved bound for the rigidity of linearly constrained frameworks}

\author[Bill Jackson]{Bill Jackson}
\address{School of Mathematical Sciences, Queen Mary
University of London, Mile End Road, London\\ E1 4NS \\ U.K. }
\email{b.jackson@qmul.ac.uk}
\author[Anthony Nixon]{Anthony Nixon}
\address{Department of Mathematics and Statistics\\ Lancaster University\\ Lancaster \\
LA1 4YF \\ U.K. }
\email{a.nixon@lancaster.ac.uk}
\author[Shin-ichi Tanigawa]{Shin-ichi Tanigawa}
\address{Department of Mathematical Informatics\\ Graduate School of Information
Science and Technology\\ University of Tokyo\\ 7-3-1 Hongo\\ Bunkyo-ku\\
113-8656\\ Tokyo \\ Japan}
\email{tanigawa@mist.i.u-tokyo.ac.jp}
\date{\today}

\begin{abstract}
We consider the problem of characterising the generic rigidity of bar-joint frameworks in $\R^d$ in which each vertex is constrained to lie in a given affine subspace. The special case when $d=2$ was previously solved by I. Streinu and L. Theran in 2010 and the case when 
each vertex is constrained to lie in an affine subspace of dimension $t$, 
and $d\geq t(t-1)$ was solved by Cruickshank, Guler and the first two authors in 2019.
We extend the latter result by showing that the given characterisation holds whenever $d\geq 2t$.
\end{abstract}

\keywords{rigidity,  linearly constrained framework, pinned framework, count matroid}
\subjclass[2010]{52C25, 05C10 \and 53A05}

\maketitle

\section{Introduction}\label{introduction}

A (bar-joint) framework $(G,p)$ in $\mathbb{R}^d$ is the combination of a simple graph $G=(V,E)$ and a  realisation $p:V\rightarrow \mathbb{R}^d$. The framework $(G,p)$ is \emph{rigid} if every edge-length preserving continuous motion of the vertices arises as a congruence of $\mathbb{R}^d$.

It is NP-hard to determine whether a given framework is rigid \cite{Abb}, but this problem becomes more tractable when one considers the \emph{generic} behaviour. It is known that the rigidity of a generic framework $(G,p)$ in $\mathbb{R}^d$ depends only on the underlying graph $G$, see \cite{AR}. We say that $G$ is \emph{rigid} in $\mathbb{R}^d$ if some (and hence every) generic realisation of $G$ in $\mathbb{R}^d$ is rigid. The problem of characterising graphs which are rigid in $\R^d$ has been solved for $d=1,2$ but is open for all $d\geq 3$.

We will consider the problem of characterising the generic rigidity of bar-joint frameworks in $\R^d$ with additional constraints that  require some  vertices to lie in given affine subspaces. We model the underlying incidence structure of such a framework as  a {\em looped simple graph} $G=(V,E,L)$ where the vertex set $V$ represents the joints, the edge set $E$ represents the distance constraints between pairs of distinct vertices and the loop set $L$ represents the subspace constraints on individual vertices.  We will distinguish between edges and loops throughout the paper, an edge will always have two distinct end-vertices and a loop will always have two identical end-vertices.

Motivated by potential applications in sensor network localisation and in mechanical engineering, rigidity has already been considered for bar-joint frameworks with various kinds of additional constraints \cite{CGJN,KatTan,SSW,ST}. Following \cite{CGJN}, we define a {\em linearly constrained framework in $\R^d$}  to be a triple $(G,
p, q)$ where $G=(V,E,L)$ is a looped simple graph,
$p:V\to \R^d$ and
$q:L\to \R^d$. For $v_i\in V$ and $\ell_j\in L$ we put $p(v_i)=p_i$ and
$q(\ell_j)=q_j$.
The framework $(G,p)$ is {\em generic} if $(p, q)$ is algebraically independent over
$\mathbb{Q}$.

An {\em infinitesimal motion} of $(G, p, q)$ is a map $\dot
p:V\to \R^d$ satisfying the system of linear equations:
\begin{eqnarray}
\label{eqn1} (p_i-p_j)\cdot (\dot p_i-\dot p_j)&=&0 \mbox{ for all $v_iv_j \in E$}\\
\label{eqn2} q_j\cdot \dot p_i&=&0 \mbox{ for all incident pairs $v_i\in V$ and
$\ell_j \in L$.}
\end{eqnarray}
The second constraint implies that  the infinitesimal velocity of each
$v_i\in V$ is constrained to lie on the hyperplane through $p_i$ with normal vector $q_j$
for each loop $\ell_j$ incident to $v_i$.

The {\em rigidity matrix $R (G, p, q)$} of the linearly constrained framework $(G, p, q)$ is  the
matrix of coefficients of this system of equations for the unknowns
$\dot p$. Thus $R (G, p, q)$ is a $(|E|+|L|)\times d|V|$ matrix, in
which: the row indexed  by an edge $v_iv_j\in E$ has $p(u)-p(v)$ and
$p(v)-p(u)$ in the $d$ columns indexed by $v_i$ and $v_j$,
respectively and zeros elsewhere; the row indexed  by a loop
$\ell_j=v_iv_i\in L$ has $q_j$  in the $d$ columns indexed by $v_i$ and
zeros elsewhere. The $|E|\times d|V|$ sub-matrix consisting of the rows indexed by $E$ is the {\em bar-joint rigidity matrix} $R(G-L,p)$ of the bar-joint framework $(G-L,p)$.

The framework $(G, p, q)$ is {\em infinitesimally rigid} if its only
infinitesimal motion is $\dot p =0$, or equivalently if $\rank R(G,
p, q) = d|V|$. We say that the looped simple graph $G$ is  {\em rigid} in $\R^d$
if $\rank R(G, p, q) = d|V|$ for some realisation $(G,p, q)$ in
$\R^d$, or equivalently if $\rank R(G, p, q) = d|V|$ for all {\em
generic} realisations $(G,p,q)$ i.e.\  all realisations for which
$(p,q)$ is algebraically independent over $\bQ$.
Streinu and Theran \cite{ST} gave a complete characterisation of looped simple graphs which are rigid in  $\mathbb{R}^2$. 
Cruickshank et al.~\cite{CGJN} extended their characterisation to higher dimensions for graphs in which each vertex is incident to sufficiently many loops.
We need to introduce some terminology to describe this result. 

Given a looped simple graph  $G=(V,E,L)$ and $X\subseteq V$ let $i(X)$ denote the number of edges and loops in the subgraph of $G$ induced by $X$. We say that $G$ is {\em $k$-sparse} for some integer $k\geq 1$, if $i(X)\leq k|X|$ for all $X\subseteq V$ and that $G$ is {\em $k$-tight} if it is a $k$-sparse graph with $|E\cup L|=k|V|$.  
Let $G^{[k]}$ denote the graph obtained from $G$ by adding $k$ new loops at every vertex.
The following conjecture is posed in \cite{CGJN}.

\begin{con}[\cite{CGJN}] \label{conj:main}
Suppose $G$ is a looped simple graph and $d,t$ are positive integers with $d\geq 2t$. Then $G^\dmt$ can be realised as an infinitesimally rigid
linearly constrained framework in $\R^d$ if and
only if $G$ has a $t$-tight looped simple spanning  subgraph.
\end{con}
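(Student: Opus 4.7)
The plan is to prove the two implications separately, with the sufficiency direction carried out by an inductive construction of $t$-tight looped simple graphs. For necessity, suppose $(G^\dmt,p,q)$ is infinitesimally rigid at some generic realisation, so its rigidity matrix has rank $d|V|$. At a generic point the $d-t$ rows added at each vertex $v$ are generic vectors supported only on the $d$ columns of $v$, so the $(d-t)|V|$ added-loop rows are linearly independent and the rows from $E\cup L$ must contribute at least $t|V|$ to the rank. For any $X\subseteq V$, the rows induced on $X$ together with the added loop rows at vertices of $X$ all lie in the $d|X|$-dimensional subspace indexed by the columns of $X$, and the latter $(d-t)|X|$ rows are independent, so at most $t|X|$ induced rows can appear in any basis. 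A basis of the rigidity matrix thus selects a $t$-sparse set in $E\cup L$ of size at least $t|V|$, which is a $t$-tight spanning subgraph of $G$.

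For sufficiency, the aim is to show that every $t$-tight looped simple graph $H$ has $H^\dmt$ infinitesimally rigid in $\R^d$; this then yields the result, by restriction to any $t$-tight spanning subgraph of $G$. The inductive hypothesis, strengthened slightly beyond bare rigidity, is that for any $t$-tight looped simple graph $K$ on fewer vertices the rigidity matroid of $K^\dmt$ restricted to $E(K)\cup L(K)$ coincides with the $(t,0)$-sparsity matroid, so every edge and loop of $K$ is non-redundant. Using a recursive decomposition of $(t,0)$-tight looped simple graphs, any such $H$ admits at a vertex of small extended degree either a \emph{$0$-reduction}, removing a vertex of total incidence $t$ to obtain a $t$-tight graph on $|V|-1$ vertices, or a \emph{$1$-reduction}, removing a vertex of total incidence $t+1$ and adding back one compatible edge or loop. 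The base case of a single vertex with $t$ loops becomes, after appending $d-t$ loops, a $d\times d$ matrix of generic vectors of full rank. The $0$-extension step is routine: placing the new vertex $v$ generically, the $d-t$ added loops together with the $t$ new incidences at $v$ contribute $d$ rows whose projections onto the $v$-columns are generically independent, giving the required rank increase of $d$.

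The main obstacle is the $1$-extension step, where $H=H''-e+v$ with $v$ carrying $t+1$ new incidences. By the strengthened inductive hypothesis $e$ is non-redundant in $H''^\dmt$, so $(H''-e)^\dmt$ has a one-dimensional infinitesimal-motion space spanned by some $\dot p^{(0)}$, and any infinitesimal motion of $H^\dmt$ restricts to $\lambda\dot p^{(0)}$ on $V(H'')$ for some scalar $\lambda$. One must show that for a generic placement of $v$ the $d+1$ constraints at $v$ force $\lambda=0$ and $\dot p_v=0$. The $d-t$ added loop rows together with any loops at $v$ coming from $H$ give homogeneous constraints on $\dot p_v$ with generic normals, while each edge $vu$ among the new incidences contributes $(p_v-p_u)\cdot\dot p_v=\lambda(p_v-p_u)\cdot\dot p^{(0)}_u$. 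Assembling these into a $(d+1)\times(d+1)$ system in $(\dot p_v,\lambda)$, it remains to show that its determinant is a generically nonzero polynomial in the free parameters; equivalently, that the cokernel direction of the $(d+1)\times d$ coefficient block of $\dot p_v$ is not orthogonal to the $\lambda$-column. The hypothesis $d\geq 2t$ enters precisely here, guaranteeing that the $d-t$ loop normals at $v$ and the projections of the $t+1$ new edge directions onto the $v$-columns can be placed in sufficiently general position for this polynomial not to vanish identically; this refinement sharpens the cruder dimension count of \cite{CGJN} that forced $d\geq t(t-1)$.
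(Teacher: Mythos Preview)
Your sufficiency argument has a genuine gap at the recursive decomposition step. You assert that every $t$-tight looped simple graph $H$ admits either a $0$-reduction (at a vertex of total incidence $t$) or a $1$-reduction (at a vertex of total incidence $t+1$). This is false for $t\geq 2$: the complete graph $K_{2t+1}$ is $t$-tight and $2t$-regular, so has no vertex of incidence at most $t+1$. More generally, $t$-tight looped simple graphs may require Henneberg moves of order up to $t$ to construct, and it is exactly the difficulty of proving that such higher-order moves preserve rigidity that forced the bound $d\geq t(t-1)$ in \cite{CGJN}. Your proposal, if repaired by allowing higher reductions, would at best recover that earlier result, not the improved bound. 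A second, related problem is your claim that the hypothesis $d\geq 2t$ ``enters precisely'' in the nonvanishing of the $1$-extension determinant: in fact $1$-extension preserves independence in every dimension (this is Lemma~\ref{lem:01ext}), so the bound cannot be located there.

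The paper circumvents the decomposition obstacle entirely. Rather than building $t$-tight graphs inductively, it works with the full graph $G$ (having at least $\lfloor d/2\rfloor$ loops at each vertex), assumes for contradiction that it contains an $\mathcal{R}_d$-circuit $C$, and proves the key structural fact (Lemma~\ref{lem:shincircuit}, via a circuit-exchange trick) that any vertex incident to a dependent loop is \emph{fixed} by every infinitesimal motion. Iterating this inside the proof of Theorem~\ref{thm:main} shows that every looped vertex of $C$ must carry a full $d$ loops in $G$, i.e.\ is effectively pinned. The edge-set of $C$ is then a simple graph whose pinned independence can be established by a separate lemma (Lemma~\ref{lem:pin}); \emph{that} lemma does use $0$- and $1$-extensions, but on a $\lceil d/2\rceil$-sparse simple graph, where the minimum degree really is at most $d+1$ and the inductive step goes through. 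The bound $d\geq 2t$ (equivalently, $d-t\geq \lfloor d/2\rfloor$) is what guarantees that the loop count at each vertex is large enough for this pinning argument to apply.
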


The main result of \cite{CGJN}  verifies Conjecture~\ref{conj:main} in the case when $d\geq \max\{2t,t(t-1)\}$. 
Our main result, Theorem~\ref{thm:main} below, verifies Conjecture~\ref{conj:main} completely and, in addition, extends the characterisation to the case when 
$d=2t-1$.

\section{Pinned independence}

Let $G=(V,E)$ be a simple graph and $P\subseteq V$. We will consider infinitesimal motions $\dot p$ of a $d$-dimensional bar-joint framework $(G,p)$ in which the vertices in $P$ are pinned i.e. $\dot p(v)=0$ for all $v\in P$.
Let $R^{pin}(G,P,p)$ denote the submatrix obtained from the rigidity matrix $R(G,p)$ by deleting the $d$-tuples of columns corresponding to vertices of $P$. We say that $(G,P)$ is \emph{pinned independent} in $\R^d$ if the rows of $R^{pin}(G,P,p)$ are linearly independent for any generic $p$. 

A graph $G'$ is said to be obtained from another graph $G$ by a \emph{0-extension} if $G=G'-v$ for a vertex  $v\in V(G')$ with $d_{G'}(v)=d$, or a \emph{1-extension} if $G=G'-v+xy$ for a vertex $v\in V(G')$ with $d_{G'}(v)=d+1$ and $x,y\in N(v)$.
We can use standard proof techniques to show that 0-extension and 1-extension preserve pinned independence, see for example \cite{SSW}.

\begin{lem}\label{lem:01ext}
Let $(G,P)$ be pinned independent and let $(G',P)$ be obtained from $G$ by a 0-extension or a 1-extension. Then $(G',P)$ is pinned independent.
\end{lem}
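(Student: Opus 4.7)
The plan is the standard induction approach: at a suitable realisation of $G'$ I exhibit a block lower-triangular form of $R^{pin}(G',P,p)$ whose diagonal blocks are $R^{pin}(G,P,p|_{V(G)})$ and an invertible $d\times d$ matrix, and then transfer the resulting full row rank to a generic realisation by lower semicontinuity of rank. Since the new vertex $v$ is not in $V(G)$, it does not lie in $P$, so its $d$ columns survive in $R^{pin}$ in both cases.

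For a 0-extension adding a vertex $v$ with neighbours $u_1,\dots,u_d$, ordering the $d$ new rows (one per edge $vu_i$) last and the $v$-columns last gives
$$R^{pin}(G',P,p)=\begin{pmatrix} R^{pin}(G,P,p) & 0 \\ \ast & B_0\end{pmatrix},$$
with rows of $B_0$ equal to $p(v)-p(u_i)$. For generic $p$ these vectors span $\R^d$, so $B_0$ is invertible and the rows of $R^{pin}(G',P,p)$ are independent.

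For a 1-extension with $G=G'-v+xy$ and $v\in V(G')$ of degree $d+1$, fix a generic $\tilde p:V(G)\to\R^d$ and set $p(v)=\alpha \tilde p(x)+(1-\alpha)\tilde p(y)$ for some $\alpha\in(0,1)$. A direct computation shows that $\alpha\cdot(\mathrm{row}\ vx)+(1-\alpha)\cdot(\mathrm{row}\ vy)$ vanishes in the $v$-columns and equals $\alpha(1-\alpha)$ times the row indexed by $xy$ in $R^{pin}(G,P,\tilde p)$ on all other columns (the case $x,y\in P$ is excluded by hypothesis, since $R^{pin}(G,P,\tilde p)$ would then contain a zero row). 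Replacing row $vx$ by this combination (valid as $\alpha\neq 0$), the rows for edges of $G'-v$ together with the new row coincide up to nonzero scalars with the rows of $R^{pin}(G,P,\tilde p)$, while the remaining $d$ new rows---for $vy$ and the $d-1$ edges $vu_i$ with $u_i\in N_{G'}(v)\sm\{x\}$---are supported only in the $v$-columns and deliver the bottom-right block $B$ with rows $\alpha(\tilde p(x)-\tilde p(y))$ and $p(v)-\tilde p(u_i)$. For generic $\tilde p$ the first row is a nonzero multiple of $\tilde p(x)-\tilde p(y)$ and the remaining $d-1$ rows depend freely on the independent generic points $\tilde p(u_i)$, so $B$ is invertible; full rank at this special $p$ then transfers to a fully generic realisation by lower semicontinuity. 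The one delicate point is precisely this last step: one has deliberately exited the generic stratum by placing $p(v)$ collinear with $\tilde p(x),\tilde p(y)$, and one must verify that $B$ remains invertible under this constraint.
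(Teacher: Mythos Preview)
The paper does not give a proof of this lemma; it simply cites it as standard and refers to \cite{SSW}, and your argument is precisely the standard one it has in mind. One minor slip: the remaining $d$ rows for $vy$ and $vu_i$ are \emph{not} ``supported only in the $v$-columns'' (they carry entries in the $y$- and $u_i$-columns whenever those vertices are unpinned), but this is harmless since you already set the matrix up as block \emph{lower-triangular}, so those entries sit in the off-diagonal $\ast$ block and the invertibility of $B$ is all you need.
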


We can use this lemma to obtain a sufficient condition for pinned independence. 

\begin{lem}\label{lem:pin}
Let $G=(V,E)$ be a simple graph with $P\subseteq V$ and $d\geq 2$ be an integer. Construct a looped simple graph $G'$ from $G$ by adding $d$ loops to each vertex of $P$ and 
$\lfloor\frac{d}{2}\rfloor$ 
loops to each vertex of $V-P$. 
Suppose that $G'$ is $d$-sparse.
Suppose further that $G$ contains no subgraph isomorphic to $K_{d+2}$ when $d$ is odd. Then $(G,P)$ is pinned independent in $\mathbb{R}^d$.
\end{lem}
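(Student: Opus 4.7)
The plan is to argue by induction on $|V|$. The base case $V = P$ is immediate, since then $R^{pin}(G,P,p)$ has no columns. For the inductive step, I want to identify a non-pinned vertex $v$ of small degree and apply Lemma~\ref{lem:01ext} in its inverse form (either a 0-extension or a 1-extension).

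Rearranging the $d$-sparsity inequality $i_{G'}(V) \le d|V|$ gives $|E| \le \lceil d/2 \rceil\, |V \setminus P|$, and a handshake count yields
$$\sum_{v \in V\setminus P} d_G(v) \;\le\; 2|E| \;\le\; 2 \lceil d/2 \rceil\, |V\setminus P|,$$
so there exists $v \in V \setminus P$ with $d_G(v) \le 2\lceil d/2\rceil$; that is, $d_G(v) \le d$ when $d$ is even and $d_G(v) \le d+1$ when $d$ is odd.

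If $d_G(v) \le d$, I would delete $v$: the pair $(G-v, P)$ inherits both hypotheses (sparsity is subgraph-monotone and $K_{d+2}$-freeness is trivially preserved), so by the inductive hypothesis $(G-v,P)$ is pinned independent, and then the 0-extension part of Lemma~\ref{lem:01ext}---together with its routine extension to vertices of degree strictly less than $d$, whose rows remain independent by genericity---gives the conclusion.

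The interesting case is $d$ odd with $d_G(v) = d+1$, and this is where the main obstacle lies. I would search for a non-edge $xy$ with $x,y \in N(v)$ such that the modified graph $G - v + xy$ continues to satisfy both hypotheses; sparsity of $(G-v+xy)'$ amounts to requiring that no subset $X \supseteq \{x,y\}$ with $v \notin X$ be \emph{tight} in $G'$ (meaning $i_{G'}(X) = d|X|$), while $K_{d+2}$-freeness requires that adding $xy$ does not complete a $K_{d+2}$ inside $G-v$. If such an $xy$ exists, the induction hypothesis applies to $(G-v+xy,P)$ and the 1-extension part of Lemma~\ref{lem:01ext} completes the step. The hard part is proving such a non-edge exists. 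Assuming not, each non-edge $xy \subseteq N(v)$ comes with a tight certificate $X_{xy}$ containing $\{x,y\}$ and omitting $v$, or with a $K_{d+2}$-almost-clique obstruction. Using the submodularity of tight sets for $d$-sparse graphs (if $X_1, X_2$ are tight with $X_1 \cap X_2 \neq \emptyset$, then $X_1 \cup X_2$ is tight), one chains these certificates---exploiting the $K_{d+2}$-exclusion to rule out the degenerate situation where $N(v) \cup \{v\}$ induces a clique and no non-edge is available---to produce a single tight set $T \supseteq N(v)$ with $v \notin T$. The contradiction then comes from
$$i_{G'}(T \cup \{v\}) \;\geq\; d|T| + (d+1) + \lfloor d/2 \rfloor \;>\; d(|T|+1),$$
violating $d$-sparsity of $G'$.
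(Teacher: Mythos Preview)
Your inductive setup and the degree count locating $v\in V\setminus P$ with $d_G(v)\le 2\lceil d/2\rceil$ match the paper, and the $0$-extension case is fine. The gap is in the hard case $d$ odd, $d_G(v)=d+1$.

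You omit a structural reduction that the paper exploits and that your own handshake chain already proves. If the minimum of $d_G$ over $V\setminus P$ equals $d+1$, then both inequalities in
\[
\sum_{w\in V\setminus P} d_G(w)\ \le\ 2|E|\ \le\ (d+1)\,|V\setminus P|
\]
are equalities; the first forces $\sum_{w\in P}d_G(w)=0$, so every vertex of $P$ is isolated, and the second forces every vertex of $V\setminus P$ to have degree exactly $d+1$. After reducing to a connected component (which you should do at the start), this yields $P=\emptyset$ and $G$ is $(d+1)$-regular. The paper uses precisely this.

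Without that reduction, your submodularity plan breaks down in two ways. First, a $K_{d+2}$-obstruction for a non-edge $xy$ is \emph{not} a tight set: with $P\cap V(K)=\emptyset$ one computes
\[
i_{G'}(V(K))=\binom{d+2}{2}-1+\tfrac{d-1}{2}(d+2)=d(d+2)-1,
\]
one short of tight, so it cannot be merged into your set $T$ by tight-set submodularity. Second, even if every obstruction were a genuine tight set, chaining them to cover all of $N(v)$ requires the non-edges inside $N(v)$ to form a connected graph touching every vertex of $N(v)$; you do not establish this, and in general the union of the $X_{xy}$ need not contain $N(v)$, so the final inequality $i_{G'}(T\cup\{v\})>d(|T|+1)$ is unavailable.

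The paper's route is different. Once $P=\emptyset$ and $G$ is connected $(d+1)$-regular, every proper $X\subsetneq V$ satisfies $i_G(X)<\tfrac{d+1}{2}|X|$, so $(G-v+xy)'$ is automatically $d$-sparse for \emph{every} non-edge $xy$: tight-set obstructions simply do not occur. Only the $K_{d+2}$-obstruction remains, and it is handled not by submodularity but by switching non-edges: if $G-v+xy$ contains a copy $K$ of $K_{d+2}$, regularity forces $N_G(x)=(V(K)\setminus\{x,y\})\cup\{v\}$, whence for any $z\in N(v)\setminus\{x,y\}$ one has $xz\notin E$ and $G-v+xz$ is both $\tfrac{d+1}{2}$-sparse and $K_{d+2}$-free, so the $1$-extension goes through.
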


\begin{proof}
We prove the lemma by induction on $|V|$.
The conclusion is trivial if $V=P$ or $|V|=1$ so we may suppose not. Moreover we may assume $G$ is connected since the lemma holds for $G$ if and only if it holds for each connected component of $G$.
Let $H$ be the graph obtained from $G'$ by deleting $\lfloor\frac{d}{2}\rfloor$ loops from every vertex. Then $H$ is $\lceil\frac{d}{2}\rceil$-sparse and hence the minimum degree of $H$ is at most $d+1$. Furthermore, if the minimum degree of $H$ is equal to $d+1$, then $d$ is odd, $H$ is $(d+1)$-regular and $P=\emptyset$. 

Let $v$ be a vertex of minimum degree in $H$. Since each vertex in $P$ has degree at least $d+1$ in $H$,
$v\in V-P$.
Then $(G-v)'=G'-v$ satisfies the hypotheses of the lemma and hence $(G-v,P)$ is pinned independent in $\mathbb{R}^d$ by induction. If $d_H(v)\leq d$ , then $G$ can obtained from $G-v$ by a 0-extension and Lemma \ref{lem:01ext}  implies that  $(G,P)$ is pinned independent. Hence we may suppose that $d_H(v)=d+1$. As noted above, this implies that  $d$ is odd, $P=\emptyset$, $H=G$ and $G$ is $(d+1)$-regular.  

We will show that $G-v+xy$ satisfies the hypotheses of the lemma for two non-adjacent neighbours $x,y$ of $v$ in $G$. Since $P=\emptyset$, this is equivalent to showing that $G-v+xy$ is $\frac{d+1}{2}$-sparse and has no $K_{d+2}$-subgraph.
Since $d$ is odd, $G\neq K_{d+2}$ and we may choose $x,y\in N(v)$ such that $xy\notin E$. Since $G$ is connected and $(d+1)$-regular, we have $i(X)<\frac{d+1}{2}|X|$ for all $X\subsetneq V$ and hence $G-v+xy$ is $\frac{d+1}{2}$-sparse.
Suppose 
$G-v+xy$ contains a subgraph $K$ isomorphic to $K_{d+2}$.  Then $x,y\in V(K)$, and the fact that $G$ is $(d+1)$-regular implies that $N(v)\cap V(K)=\{x,y\}$. We can now deduce that, for all  $z\in N(v)-\{x,y\}$, $zx\not\in E$ and $G-v+xz$ is $\frac{d+1}{2}$-sparse and has no $K_{d+2}$-subgraph. 

By induction $G-v+xz$ is (pinned) independent in $\mathbb{R}^d$. Since $G$ is obtained from $G-v+xz$ by a 1-extension,  $G$ is (pinned) independent by Lemma \ref{lem:01ext}. This completes the proof.
\end{proof}

\section{Linearly constrained rigidity}

Let $(K_n^{[d+1]}, p, q)$ be a generic $d$-dimensional realization of the complete graph on $n$ vertices with $d+1$ loops on each vertex.
Since each edge/loop of $K_n^{[d+1]}$ is associated with a row of $R(K_n^{[d+1]}, p, q)$, we can define 
a matroid  on the union of the edge set and the  loop set of $K_n^{[d+1]}$ by the  linear independence of the row vectors of $R(K_n^{[d+1]}, p, q)$. This matroid is called the {\em generic linearly constrained rigidity matroid} $\mathcal{R}_{d,n}$.
A looped simple graph $G=(V,E,L)$ 
with $n$ vertices 
is said to be an {\em $\mathcal{R}_d$-circuit} if $E\cup L$ is a circuit in $\mathcal{R}_{d,n}$. 

We first derive a rather surprising result concerning the  infinitesimal motions of an arbitrary linearly constrained framework in $\R^d$.

\begin{lem}\label{lem:shincircuit}
Let $(G,p,q)$ be a generic linearly constrained framework in $\R^d$. Suppose that $v$ is a vertex of $G$ and $\rank R(G,p,q)=\rank R(G-\ell,p,q)$ for some loop $\ell$ incident to $v$.
Then $\dot p(v)=0$ for every infinitesimal motion $\dot p$ of $(G,p,q)$. 
\end{lem}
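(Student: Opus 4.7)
The plan is to translate the rank equality into a kernel equality and then extract a strong consequence from the genericity of $q_\ell$. If $\rank R(G,p,q) = \rank R(G-\ell,p,q)$, then the row $r_\ell$ of $R(G,p,q)$ indexed by $\ell$ lies in the row space $W$ of $R(G-\ell,p,q)$, and so $\ker R(G,p,q) = \ker R(G-\ell,p,q) = W^\perp$. The crucial observation is that $q_\ell$ appears in no row of $R(G-\ell,p,q)$; hence $W$ is a subspace of $\R^{d|V|}$ spanned by vectors with entries in the field $K := \mathbb{Q}(p, q_{-\ell})$, where $q_{-\ell}$ denotes the tuple of all loop vectors other than $q_\ell$.

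The heart of the argument is to show that $W$ contains the entire ``$v$-block'' $Z_v := \{z \in \R^{d|V|} : z_u = 0 \text{ for all } u \neq v\}$. Let $U := \{x \in \R^d : e_v \otimes x \in W\}$, where $e_v \otimes x$ denotes the element of $Z_v$ with $v$-block equal to $x$. Standard linear algebra exhibits $U$ as a linear subspace of $\R^d$ definable by equations with coefficients in $K$. By the structure of the rigidity matrix, $r_\ell = e_v \otimes q_\ell$, so the hypothesis $r_\ell \in W$ reads $q_\ell \in U$. Since $(p,q)$ is algebraically independent over $\mathbb{Q}$, the coordinates of $q_\ell$ remain algebraically---hence linearly---independent over $K$, ruling out any non-trivial $K$-linear equation satisfied by $q_\ell$. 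We conclude $U = \R^d$ and therefore $Z_v \subseteq W$.

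The conclusion is then immediate: any infinitesimal motion $\dot p \in W^\perp$ is orthogonal to every $e_v \otimes x$ with $x \in \R^d$, which forces $\dot p(v) \cdot x = 0$ for all $x$, i.e.\ $\dot p(v) = 0$. The only step that requires real care is the middle paragraph, where one must promote the single relation $q_\ell \in U$ into the much stronger statement $U = \R^d$; the algebraic independence of the generic coordinates of $q_\ell$ over $K$ is precisely the tool that enables this jump. Everything else is straightforward bookkeeping with row spaces and orthogonal complements.
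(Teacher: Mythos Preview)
Your proof is correct and takes a genuinely different route from the paper's. The paper argues by induction on $|E(G)|$ together with the matroid circuit-exchange axiom: it first reduces to the case that $G$ itself is an $\mathcal{R}_d$-circuit, then adjoins a fresh loop $\ell^*$ at $v$, observes that $G^*=G-\ell+\ell^*$ is an isomorphic circuit, and applies circuit exchange on a common edge $e$ incident to $v$ to obtain a circuit $G'\subseteq (G\cup\{\ell^*\})-e$ containing both $\ell$ and $\ell^*$; induction fixes $v$ in every motion of $G'$, and since $\ell^*$ is redundant in the circuit $G'$ the same holds for $G'-\ell^*\subseteq G$. Your argument bypasses the combinatorics and works directly with linear algebra over the field $K=\mathbb{Q}(p,q_{-\ell})$: the row space $W$ of $R(G-\ell,p,q)$ is $K$-defined, hence so is $U=\{x\in\R^d:e_v\otimes x\in W\}$, and the algebraic independence of the coordinates of $q_\ell$ over $K$ forces $U=\R^d$. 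This is shorter, avoids both induction and matroid theory, and isolates exactly which part of genericity is doing the work (only the transcendence of $q_\ell$ over the remaining data). The paper's approach, by contrast, stays within the matroid language used throughout, and its ``add a loop then circuit-exchange'' manoeuvre is reused verbatim in the proof of Theorem~\ref{thm:main}, so it earns its keep there.
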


\begin{proof}
We proceed by induction on $|E(G)|$. 
The hypothesis that $\rank(G,p,q)=\rank(G-\ell,p,q)$ implies that $\ell$ is contained in some ${\mathcal R}_d$-circuit $C$ in $G$. 
If $C\neq G$ then we can apply induction to $(C,p|_{V(C)},q|_{V(C)})$ to deduce that $\dot p(v)=0$. Hence  we may suppose $G=C$. If $v$ is incident with $d$ loops then $v$ is fixed in every infinitesimal motion of  $(G,p,q)$ so we may suppose $v$ is incident to at most $d-1$ loops. Since $G$ is a ${\mathcal R}_d$-circuit, this implies that $v$ is incident to an edge $e\in E(G)$.

Let $G^+$ be the looped simple graph obtained from $G$ by adding a new loop $\ell^*$ at $v$ and put $G^*=G^+-\ell$. Then $G$ and $G^*$ are isomorphic so are both $\mathcal{R}_d$-circuits in the linearly constrained rigidity matroid of $G^+$. Since $e$ is a common edge of $G$ and $G^*$, we can apply the matroid circuit  exchange axiom to deduce that there exists a third $\mathcal{R}_d$-circuit $G'\subseteq G^+-e$. Since $G$ and $G^*$ are $\mathcal{R}_d$-circuits,  $\ell$ and $\ell^*$ are both loops in $G'$. 
Since $|E(G')<|E(G)|$, we can apply induction to deduce that $v$ is fixed in every infinitesimal motion of any generic realisation $(G',p',q')$ of $G'$. Since $G'$ is a $\mathcal{R}_d$-circuit and $\ell^*\in L(G')$, the space of infinitesimal motions of $(G',p',q')$ and $(G'-\ell^*,p',q'|_{L(G')-\ell^*})$ are the same and hence $v$ is fixed in every infinitesimal motion of any generic realisation  of  $G'-\ell^*$. Since $G'-\ell^* \subseteq G$, the same conclusion holds for $G$.
\end{proof}

We next use Lemmas  \ref{lem:pin} and \ref{lem:shincircuit} to characterise independence  for generic linearly constrained frameworks in $\R^d$ when each vertex is incident with sufficiently many loops. We say that a looped simple graph is {\em $K_{k}$-free} if it has no subgraph isomorphic to the complete graph $K_k$.

\begin{thm}\label{thm:main}
Suppose $d\geq 2$ is an integer and $G=(V,E,L)$ is a looped simple graph  with the property  that every vertex of $G$  is incident with at least $\lfloor\frac{d}{2}\rfloor$ loops.  Then $G$ is independent in $\mathbb{R}^d$ if and only if 
$G$ is $d$-sparse and $K_{d+2}$-free.
\end{thm}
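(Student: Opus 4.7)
The plan is to prove necessity in a standard way, then prove sufficiency by induction on $|V|$, combining Lemma~\ref{lem:pin} (which supplies bar-joint independence modulo pinning) with Lemma~\ref{lem:shincircuit} (which pins any vertex carrying a loop in some $\mathcal{R}_d$-circuit of $G$).

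Necessity is quick. Each row of $R(G,p,q)$ indexed by an element of the subgraph on $X\subseteq V$ is supported in the $d|X|$ columns indexed by $X$, so rank-counting forces $d$-sparsity. For $K_{d+2}$-freeness, the identity
$$
\binom{d+2}{2}-\Bigl(d(d+2)-\binom{d+1}{2}\Bigr)=1
$$
shows that $K_{d+2}$ is already an $\mathcal{R}_d$-circuit in the bar-joint sense, and that dependency persists when loops are added, so any $K_{d+2}$-subgraph of $G$ would destroy independence.

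For sufficiency induct on $|V|$; the base $|V|=1$ is immediate. In the inductive step set $P=\{v:k_v(G)=d\}$. If $P\neq\emptyset$, pick $v\in P$, delete $v$ together with its $d$ loops and all its incident edges, and place one new loop at each edge-neighbour $u$ of $v$ with normal proportional to $p(u)-p(v)$. A short count using the $d$-sparsity of $G$ on $X\cup\{v\}$ shows the resulting graph $G^{\star}$ on $V\setminus\{v\}$ is $d$-sparse and $K_{d+2}$-free, and loop counts at surviving vertices can only grow, so all three hypotheses persist. Since the $d$ generic loops at $v$ saturate its columns in $R(G,p,q)$, a block-row reduction yields $\rank R(G)=\rank R(G^{\star})+d$, and the inductive independence of $G^{\star}$ gives independence of $G$. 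In the main case $P=\emptyset$ every vertex has $\lfloor d/2\rfloor\leq k_v\leq d-1$. I then apply Lemma~\ref{lem:pin} to the simple graph $G-L$ with pinning set $\emptyset$: the $d$-sparsity of $G$ together with $k_v\geq\lfloor d/2\rfloor$ gives $|E(X)|\leq\lceil d/2\rceil|X|$ for all $X$, which is exactly $d$-sparsity of the auxiliary graph carrying $\lfloor d/2\rfloor$ loops per vertex, and the $K_{d+2}$-free hypothesis supplies the odd-$d$ side condition. Hence $G-L$ is bar-joint independent, ruling out any loop-free $\mathcal{R}_d$-circuit inside $G$. To rule out circuits that contain at least one loop, pass to a minimal counterexample (in the lexicographic order $(|V|,|E|+|L|)$) and reduce to the case where $G$ is itself an $\mathcal{R}_d$-circuit: given any proper circuit $C\subsetneq G$, padding $C$ with missing loops from $G$ to restore $\lfloor d/2\rfloor$ loops at each vertex of $V(C)$ produces a strictly smaller looped simple graph $\tilde C$ satisfying all three hypotheses and still dependent, contradicting minimality. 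Once $G$ is itself a circuit, every loop $\ell\in L(G)$ satisfies $\rank R(G,p,q)=\rank R(G-\ell,p,q)$, so by Lemma~\ref{lem:shincircuit} every vertex incident to some loop of $G$ is pinned in $(G,p,q)$; as $\lfloor d/2\rfloor\geq 1$ this pins every vertex, contradicting $\dim\ker R(G,p,q)=d|V|-(|E|+|L|-1)\geq 1$.

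The hard part will be completing the topping-up reduction in the edge case where $\tilde C$ coincides with $G$. This happens exactly when $E(C)=E(G)$ and every loop-deficient vertex of $C$ (a vertex $v$ with $c_v<\lfloor d/2\rfloor$, where $c_v$ counts loops at $v$ in $C$) is saturated at $k_v(G)=\lfloor d/2\rfloor$ in $G$, so the padded graph gains no slack. Overcoming this will probably require a finer inductive parameter that lexicographically tracks the loop deficit $\sum_v(\lfloor d/2\rfloor-c_v)^{+}$, or a matroid circuit-exchange argument that produces a second circuit of $G$ through one of the excess loops at such a vertex so that Lemma~\ref{lem:shincircuit} can pin it independently and a genuinely smaller dependent subgraph respecting the loop hypothesis can be extracted.
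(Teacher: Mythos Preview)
Your sufficiency argument has a genuine gap that you yourself flag in the final paragraph: the reduction to ``$G$ is itself an $\mathcal{R}_d$-circuit'' breaks down when the padded graph $\tilde C$ coincides with $G$. This occurs exactly when $V(C)=V$, $E(C)=E$, and every vertex with $c_v<\lfloor d/2\rfloor$ satisfies $k_v(G)=\lfloor d/2\rfloor$, so padding adds no slack. Your minimal-counterexample machinery then produces nothing smaller, and neither of your suggested fixes is actually carried out. This is the crux of the theorem, not a side issue. (There is also a smaller gap in the $P\neq\emptyset$ case: your block reduction gives $\rank R(G)=\rank R(G^{\star})+d$ only for the \emph{specific} realisation of $G^{\star}$ with new loop normals $p(u)-p(v)$, which is not generic since those normals are algebraically dependent on the positions; generic independence of $G^{\star}$ does not immediately transfer.)

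The paper never attempts to force $G$ to be a circuit. After first handling the case where $G$ contains a $d$-tight proper subgraph by a substitution argument, it takes an arbitrary circuit $C\subseteq G$ and uses circuit exchange (the same device as in the proof of Lemma~\ref{lem:shincircuit}) to prove that every vertex of $C$ incident with a loop of $C$ must carry $d$ loops in $G$. Taking $P$ to be exactly this set of vertices, the auxiliary graph required by Lemma~\ref{lem:pin} for $(C-L(C),P)$ is then a subgraph of $G$, hence $d$-sparse and $K_{d+2}$-free, so $(C-L(C),P)$ is pinned independent. The block structure of $R(C,p,q)$ now forces every row dependency to have zero coefficients on the edge rows of $C$, contradicting the circuit property. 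Your intuition that circuit exchange is the missing ingredient is correct, but its role is to identify the correct pinning set $P$ for Lemma~\ref{lem:pin}, not to shrink the counterexample.
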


\begin{proof}
To prove the necessity we suppose that $G$ is independent in $\R^d$. Then every  subgraph $G'=(V',E',L')$ of $G$ is independent in $\R^d$. This implies that $|E'\cup L'|\leq d|V'|$ (since the rows of $R(G',p',q')$ are linearly independent for any  generic realisation $(G',p',q')$ of $G'$), and $G'\neq K_{d+2}$ (since $K_{d+2}$ is dependent as a bar-joint framework in $\mathbb{R}^d$). Hence $G$ is $d$-sparse and $K_{d+2}$-free.

To prove sufficiency, we suppose that $G$ is $d$-sparse and $K_{d+2}$-free.
We show that $G$ is independent in $\mathbb{R}^d$ by induction on $|V|+|E|$.
The  cases when $|V|=1$ and  when $G$ is disconnected are straightforward so we assume that $|V|\geq 2$ and $G$ is connected.

We next consider the case when $G$ has a $d$-tight proper subgraph $H$,
where  $H$ is said to be {\em proper} if it is  connected and $1<|V(H)|<|V|$. The hyothesis that $G$ is $d$-sparse implies that $H$ is an induced subgraph of $G$ so every vertex of $H$ is incident with at least $\lfloor\frac{d}{2}\rfloor$ loops. We can now use induction to deduce that $H$ is minimally rigid. Construct a new graph $G'$ from $G$ by replacing  $H$ with another $d$-tight subgraph $H'$ on the same vertex set which has  $d$ loops at each vertex and no edges. 
It is not difficult to see that replacing the $d$-tight subgraph $H$ by the $d$-tight subgraph $H'$ preserves $d$-sparsity  and does not create a copy of $K_{d+2}$, so $G'$ satisfies the hypotheses of the theorem. Furthermore, since $H$ is a connected graph on at least two vertices, $H$ contains at least one edge and hence we can apply induction to $G'$ to deduce that $G'$ is independent in $\mathbb{R}^d$.  
Since replacing the minimally rigid subgraph $H'$ of $G'$ by the minimally rigid subgraph $H$ will not change independence,  $G$ is independent in $\mathbb{R}^d$.  

It remains to consider the case when $G$ has no $d$-tight proper subgraph. We assume, for a contradiction, that $G$ is not independent in $\R^d$. 
Then $G$ has a subgraph $C$ which is an $\mathcal{R}_d$-circuit. 
We next use the same trick as in the proof of Lemma \ref{lem:shincircuit} to show that every vertex of $C$ which is incident to a loop in $C$ must be incident to $d$ loops in $G$. 

Suppose $v$ is 
incident to a loop $\ell$ in $C$, but 
not incident 
to $d$ loops in $G$.
Let $G^+$ be the looped simple graph obtained from $G$ by adding a new loop $\ell^*$ at $v$, 
and let 
$C^*=C-\ell+\ell^*$. We can show, as in the proof of Lemma \ref{lem:shincircuit}, that there exists an $\mathcal{R}_d$-circuit $C'\subset C\cup C^*\subset G^+$ which does not contain a given edge $e$ incident to $v$ in $C$. The assumptions that $G$ has no $d$-tight proper subgraph and $v$ is not incident to $d$ loops in $G$ imply that $G^+-e$ is $d$-sparse. Since $G^+-e$ is $K_{d+2}$-free, has at least $\lfloor\frac{d}{2}\rfloor$ loops at each vertex and  has fewer edges than $G$, we may apply induction to deduce that $G^+-e$ is independent in $\R^d$. This contradicts the fact that $G^+-e$ contains the  $\mathcal{R}_d$-circuit $C'$. Hence, every vertex of $C$ which is incident to a loop in $C$ must be incident to $d$ loops in $G$. 

Let $H=C-L(C)$ be the underlying simple graph of $C$ and $P$ be the set of all vertices in $C$ which are incident to at least one loop in $C$. Let $H'$ be obtained from $H$ by adding $d$ loops at each vertex in $P$ and $\lfloor\frac{d}{2}\rfloor$ loops at each vertex of $V(C)\setminus P$. Then $H'$ is a subgraph of $G$ so is $d$-sparse and $K_{d+2}$-free. 
We can now use Lemma \ref{lem:pin} to deduce that $(H,P)$ is pinned independent in $\mathbb{R}^d$ so $R^{pin}(H,P,p)$ has linearly independent rows for any generic $p$. We now compare $R^{pin}(H,P,p)$ and $R (C, p, q)$, for some generic $q$. Since $P$ is the set of  vertices of $C$ which are incident with a loop in $C$, $R (C, p, q)$ has the block structure 
$$
\kbordermatrix{
 & P  & V(C)\setminus P  \\
E(C)&* & R^{pin}(H,P,p)\\
L(C)&* & 0
 }.
$$
The row independence of $R^{pin}(H,P,p)$ now implies that every linear combination of the rows of $R (C, p, q)$ which sums to zero will have zero coefficients for the rows indexed by $E(C)$.
This contradicts the fact that $C$ is an $\mathcal{R}_d$-circuit.
\end{proof}

Theorem \ref{thm:main} can be restated as a characterisation of rigidity.

\begin{thm}\label{thm:new}
Suppose $d\geq 2$ is an integer and $G$ is a looped simple graph  with the property  that every vertex of $G$  is incident with at least $\lfloor\frac{d}{2}\rfloor$ loops.  Then $G$ is independent in $\mathbb{R}^d$ if and only if $G$ has a spannning, $d$-tight, $K_{d+2}$-free subgraph $H$ with the property that  every vertex of $H$  is incident with at least $\lfloor\frac{d}{2}\rfloor$ loops.
\end{thm}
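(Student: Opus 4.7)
The plan is to derive Theorem \ref{thm:new} from Theorem \ref{thm:main} via a matroidal reformulation. The generic linearly constrained rigidity matroid has rank at most $d|V|$ — the number of columns of $R(G,p,q)$ — so a spanning subgraph $H$ of $G$ with $|E(H)\cup L(H)|=d|V|$ is precisely a maximum independent set (a basis) of the matroid restricted to $E(G)\cup L(G)$. This is the bridge that makes Theorem \ref{thm:main} and Theorem \ref{thm:new} equivalent.

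For one direction, assume $G$ contains a spanning, $d$-tight, $K_{d+2}$-free subgraph $H$ in which every vertex is incident to at least $\lfloor d/2\rfloor$ loops. Apply Theorem \ref{thm:main} to $H$: the hypotheses transfer directly, so the $d|V|$ rows of $R(H,p,q|_{L(H)})$ are linearly independent. Since this matches the column count, these rows form a basis of the full row space of $R(K_n^{[d+1]},p,q)$. The desired matroidal conclusion for $G$ then follows from the fact that $H$ already attains the maximum rank inside $G$.

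For the other direction, assuming the matroidal hypothesis on $G$, Theorem \ref{thm:main} immediately supplies that $G$ is $d$-sparse and $K_{d+2}$-free. The $d$-tight witness $H$ is then produced by extracting a basis of $\mathcal R_{d,n}$ restricted to $E(G)\cup L(G)$. As a subgraph of $G$, such an $H$ is automatically $K_{d+2}$-free, and being a basis it has exactly $d|V|$ elements, so it is $d$-tight.

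The main obstacle is ensuring the extracted basis $H$ satisfies the $\lfloor d/2\rfloor$-loop condition at every vertex. I would handle this by performing the basis extraction greedily, prioritising loops at each vertex before processing edges: starting with the empty set, add loops vertex by vertex as long as they remain independent, then add edges to complete the basis. Lemma \ref{lem:shincircuit} is the key tool here — it shows that once a loop at $v$ becomes dependent on the loops already selected, the infinitesimal motion of $v$ is already pinned, so no further loop at $v$ is needed for independence. Combined with the hypothesis that $G$ itself has at least $\lfloor d/2\rfloor$ loops at every vertex, this greedy strategy should guarantee that $H$ inherits the loop condition.
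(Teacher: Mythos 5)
Your overall architecture is the paper's: read ``independent'' in the statement as ``rigid'' (which is what the paper's own proof does --- the literal wording cannot be right, since a graph containing a $d$-tight spanning subgraph need not itself be $d$-sparse), prove sufficiency by applying Theorem~\ref{thm:main} to $H$ and comparing the $d|V|$ independent rows against the $d|V|$ columns, and prove necessity by extending a loop-heavy independent set to a basis of the matroid restricted to $G$. The sufficiency direction and the basis-extraction idea are both sound and match the paper.

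The gap is in your treatment of the loop condition. Lemma~\ref{lem:shincircuit} does not deliver what you claim. If, during your greedy selection, a loop at $v$ became dependent on the already chosen set $S$ before $\lfloor d/2\rfloor$ loops at $v$ had been selected, the lemma would tell you that $v$ is pinned by $S$, hence that every remaining loop at $v$ is also dependent, and your greedy procedure would terminate at $v$ with too few loops; the resulting basis $H$ would then simply fail the conclusion of the theorem, which requires the loops to be \emph{present} in $H$, not merely ``not needed for independence''. What you actually must show is that this situation never arises, and the argument is far more elementary and does not involve Lemma~\ref{lem:shincircuit} at all: the spanning subgraph $H'$ with no edges and $\lfloor d/2\rfloor$ loops at each vertex has a block-diagonal rigidity matrix whose block at $v$ consists of $\lfloor d/2\rfloor\le d$ generic vectors in $\R^d$, so $H'$ is independent, and matroid augmentation extends $H'$ to a basis $H$. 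This is exactly the paper's proof. Separately, your claim that Theorem~\ref{thm:main} ``immediately supplies that $G$ is $d$-sparse and $K_{d+2}$-free'' is false when $G$ is only assumed rigid (a rigid graph may contain $K_{d+2}$ and exceed the sparsity count); instead apply the necessity direction of Theorem~\ref{thm:main} to the independent graph $H$ itself to conclude that $H$ is $d$-sparse (hence $d$-tight, given $|E(H)\cup L(H)|=d|V|$) and $K_{d+2}$-free.
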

\begin{proof}
If $G$ has a spanning subgraph $H$ with the properties listed  in the theorem then $H$ will be rigid in $\R^d$ by Theorem \ref{thm:main}.
On the other hand, if $G$ is rigid in $\R^d$, then we can choose an independent spanning subgraph $H'$ of $G$ with no edges and $\lfloor\frac{d}{2}\rfloor$ loops at each vertex, and then extend $H'$ to a minimally rigid spanning subgraph $H$ of $G$. Then $H$ will be independent and rigid so will be $d$-tight and $K_{d+2}$-free.
\end{proof}

\section{Open questions}

We close by mentioning two further problems for linearly constrained frameworks in $\R^d$ which are solved for $d=2$ but open when $d\geq 3$.
\\[1mm]
1. The characterisation of rigidity for generic linear constrained frameworks in $\R^2$ was extended in \cite{EJNSTW,KatTan} by allowing the linear constraints to be non-generic. 
It would be of interest to extend  Theorem \ref{thm:new} in the same way.
\\
2. A linearly constrained frameworks $(G,p,q)$ in $\R^d$ is {\em globally rigid} if it is the only realisation of $G$ in $\R^d$ which satisfies the same distance and linear constraints as $(G,p,q)$. It was  proved in \cite{GJN} that a linearly constrained framework $(G,p,q)$  in $\R^2$ is globally rigid if and only if every connected component $H$ of $G$ is either a single vertex with at least 2 loops, or is redundantly rigid (i.e. $H-f$ is rigid in $\R^2$ for all edges and loops $f$ of $H$) and `2-balanced'. It seems likely that this result can be extended to higher dimensional linearly constrained frameworks when each vertex is incident to sufficiently many linear constraints. More precisely we  conjecture that, if $(G,p,q)$ is a generic linearly constrained framework  with at least two vertices in $\R^d$ and every vertex of $G$  is incident with at least $\lfloor\frac{d}{2}\rfloor$ loops, then $(G,p,q)$ is globally rigid if and only if every connected component of $G$ is either a single vertex with at least $d$ loops, or is redundantly rigid in $\R^d$. (The necessary condition from \cite{GJN} that $G$ should be `$d$-balanced' follows from the assumption that every vertex of $G$  is incident with at least $\lfloor\frac{d}{2}\rfloor$ loops.)

\subsection*{Acknowledgements} We would like to thank the Heilbronn Institute for Mathematical Research for providing partial financial support for this research. The third author was supported by JST ERATO Grant Number JPMJER1903 and JSPS KAKENHI Grant Number JP18K11155.

\end{document}